\newtheorem{lem}{\noindent {\bf Lemma}}[section]
\newtheorem{thm}{\noindent {\bf Theorem}}[section]
\newcounter{remark}
\newcounter{example}
\date{}
\newcounter{defi}\setcounter{defi}{1}
\newenvironment{defi}{
\smallskip \noindent
{\bf
  Definition \arabic{section}.\arabic{defi}.
}}{\addtocounter{defi}{1}\par}
\title{\bf Volterra-composition operators on the weighted Bergman space with exponential type weights}
\author{  Pan Xiaohua$^1$,\,\,\, Wu Yan$^2$,\,\,\, Xu Xianmin$^2$ \\
{\small 1. Department of Mathematics, Zhejiang Normal University,}\\
{\small Jinhua, Zhejiang 321004, P. R. China}\\
{\small 2. College of Mathematics Physics and Information Engineering, }\\
{\small Jiaxing University, Jiaxing, Zhejiang 314001, P. R. China}
}
\begin{document}
\maketitle
\begin{center}
\begin{minipage}{0.9\textwidth}
\noindent{\bf Abstract.}  The properties of Volterra-composition operators on the weighted Bergman space with exponential type weights  are investigated in this paper. For a certain class of subharmonic function~$\psi:\mathbb{D}\rightarrow\mathbb{R}$, we state some necessary and sufficient conditions that a Volterra-composition operator $V_{\varphi}^{g}$ from the weighted Bergman space~$AL_{\psi}^{2}(\mathbb{D})$ to Bloch type space~$B_{\psi}(\mathbb{D})$ ( or little Bloch type space~$B_{\psi,0}(\mathbb{D})$) must satisfy for $V_{\varphi}^{g}$ to be bounded or compact.\\
{\bf Keywords }The weighted Bergman space; Bloch type space(little Bloch type space); Volterra-composition operator; bounded; compact.

{\bf MR(2010) Subject Classification} 74D05; 35Q72; 35L70; 35B40
\end{minipage}
\end{center}
\footnote{
This research was supported by
the National Natural Science Foundation of China under Grant (No.11301224,11401256,11501249)

}
\begin{section}{Introduction}
Let ~$\mathbb{D}=\{z:~|z|<1\}$~be the open unit disk in the complex plane  $\mathbb{C}$ and $dA$  be the normalized area measure on $\mathbb{D}$ .
 Let $L^{2}(\mathbb{D},dA)$ be the space of square integrable functions and let $L^{\infty}(\mathbb{D},dA)$ be the space of essential bounded measurable functions. We will use abbreviations $L^{2}(\mathbb{D})$ for $L^{2}(\mathbb{D},dA)$ and $L^{\infty}(\mathbb{D})$ for $L^{\infty}(\mathbb{D},dA)$.

 For a subharmonic function $\psi:\mathbb{D}\rightarrow\mathbb{R}$, let $L^{\infty}_{\psi}(\mathbb{D})$ be the space of measurable functions~$f$~on~$\mathbb{D}$~ such that ~$e^{-\psi}f\in L^{\infty}(\mathbb{D})$
 and let $L^{2}_{\psi}(\mathbb{D})$ be the Hilbert space of measurable function ~$f$~on~$\mathbb{D}$~ such that
 $$\|f\|_{L_{\psi}^{2}}=(\int_{\mathbb{D}}|f|^{2}e^{-2\psi}dA)^{\frac{1}{2}}<\infty$$
 The inner product of $L^{2}_{\psi}(\mathbb{D})$ is given by
 $$\langle f, g\rangle_{L_{\psi}^{2}}=\int_{\mathbb{D}}f\overline{g}e^{-2\psi}dA$$

 Let~$H^\infty_\psi(\mathbb{D})$~be the subspace of ~$L^\infty_\psi(\mathbb{D})$~consisting of analytic functions and ~$AL^{2}_{\psi}(\mathbb{D})$~be the closed subspace of $L^{2}_{\psi}(\mathbb{D})$ consisting of analytic functions.
 ~$AL^{2}_{\psi}(\mathbb{D})$~ is called the weighted Bergman space with exponential type weights(see [3],[4]).

 An analytic function ~$f$~is said to belong to the  Bloch-type space ~$B_{\psi}(\mathbb{D})$~if

 $$b_{\psi}(f)=\sup_{z\in\mathbb{D}}e^{-2\psi(z)}|f'(z)|<\infty.$$
 Let ~$B_{\psi,0}(\mathbb{D})$~denote the subspace of ~$B_{\psi}(\mathbb{D})$~such that
 $$\lim_{|z| \rightarrow 1}e^{-2\psi(z)}|f'(z)|=0.$$
 This space is called little Bloch-type space.

\begin{defi}$^{[1],[2]}$\label{TH1}~~For real valued function $\psi\in C^{2}(\mathbb{D})$ with $\Delta\psi>0$,
where $\Delta$ is the Laplace operator.
Let $\tau(z)=(\Delta\psi(z))^{-\frac{1}{2}}$. We say that $\psi\in\mathcal{D}$ if the following conditions are satisfied.\\
~~$($1$)$~There exists a constant $C_{1}>0$ such that $|\tau(z)-\tau(\xi)|\leq C_{1}|z-\xi|$ for $z,\xi\in\mathbb{D}$.\\
~~$($2$)$~There exists a constant $C_{2}>0$ such that $\tau(z)\leq C_{2}(1-|z|)$ for $z\in\mathbb{D}$.\\
~~$($3$)$~There exist constants $0<C_{3}<1$ and $a>0$ such that $\tau(z)\leq \tau(\xi)+C_{3}(1-|z|)$ for  $|z-\xi|>a\tau(\xi)$.
\end{defi}

Let $H(\mathbb{D})$ be the space of analytic functions on $\mathbb{D}$ and suppose that $g:\mathbb{D}\rightarrow\mathbb{C}$ is an analytic function, $\varphi$ is an analytic self-map of the unit disk and $f\in H(\mathbb{D})$,
the Riemann-Stieltjes operator is defined by
$$T_{g}f(z)=\int_{0}^{1}f(tz)zg'(tz)dt=\int_{0}^{z}f(t)g'(t)dt,~~~~~z\in\mathbb{D}.$$
The composition operator $C_{\varphi}$ is defined as $C_{\varphi}f\triangleq f\circ \varphi$.

Let $V_{\varphi}^{g}$ be the  Volterra-composition operator which is defined as
$$V_{\varphi}^{g}f(z)\triangleq (T_{g}\circ C_{\varphi}f)(z)=\int_{0}^{z}f(\varphi(t))g'(t)dt,\quad\forall f\in H(\mathbb{D}), ~z\in\mathbb{D}.$$
Note that when $\varphi$ is an analytic self-map of the unit disk and $f,g\in H(\mathbb{D})$, we have $$(V_{\varphi}^{g}f)^{'}(z)=f(\varphi(z))g^{'}(z).$$

In this paper, we will characterize the boundedness and compactness of the operator $V_{\varphi}^{g}$ from the weighted Bergman space with exponential type weights to the Bloch-type space (or little Bloch-type space). These properties of Hankel operators have been considered(see [1],[2]).

We assume that $H_{\psi}^{\infty}(\mathbb{D})$ is dense in $AL_{\psi}^{2}(\mathbb{D})$,  the set of all polynomials is dense in $AL_{\psi}^{2}(\mathbb{D})$.

\end{section}

\begin{section}{Preliminaries}

In order to prove the main results, we need the following lemmas.

\begin{lem}
Let $$\|f\|_{B_{\psi}}=|f(0)|+\sup_{z\in\mathbb{D}}e^{-2\psi(z)}|f^{'}(z)|,$$ then Bloch--type space $B_{\psi}(\mathbb{D})$ is a Banach space with the norm $\|\cdot\|_{B_{\psi}}$.
\end{lem}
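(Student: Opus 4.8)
The plan is to proceed in two stages: first confirm that $\|\cdot\|_{B_\psi}$ is genuinely a norm, then establish completeness. The norm axioms are mostly routine, and the only point that uses the structure of $\psi$ is positive-definiteness. If $\|f\|_{B_\psi}=0$, then $|f(0)|=0$ and $e^{-2\psi(z)}|f'(z)|=0$ for every $z\in\mathbb{D}$. Since $\psi$ is a finite real-valued (indeed $C^{2}$) function on $\mathbb{D}$, the factor $e^{-2\psi(z)}$ is strictly positive, which forces $f'\equiv 0$; hence $f$ is constant, and $f(0)=0$ then gives $f\equiv 0$. Homogeneity and the triangle inequality follow at once from the corresponding properties of $|\cdot|$ and of the supremum, so $\|\cdot\|_{B_\psi}$ is a norm.

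For completeness I would take a sequence $(f_n)$ that is Cauchy in $\|\cdot\|_{B_\psi}$ and manufacture an analytic limit. The key device is that on any compact set $K\subset\mathbb{D}$ the continuous, strictly positive function $e^{-2\psi}$ attains a positive minimum $m_K>0$, so that
$$\sup_{z\in K}|f_n'(z)-f_m'(z)|\le \frac{1}{m_K}\,\|f_n-f_m\|_{B_\psi}.$$
Thus $(f_n')$ is uniformly Cauchy on every compact subset and converges locally uniformly to some analytic function $h$ on $\mathbb{D}$. At the same time $|f_n(0)-f_m(0)|\le\|f_n-f_m\|_{B_\psi}$ shows $(f_n(0))$ is Cauchy in $\mathbb{C}$, with some limit $a$. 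I would then set $f(z)=a+\int_0^{z}h(w)\,dw$, which is analytic with $f'=h$ and $f(0)=a$, and check from $f_n(z)=f_n(0)+\int_0^{z}f_n'(w)\,dw$ that $f_n\to f$ locally uniformly.

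It remains to verify that $f\in B_\psi(\mathbb{D})$ and that $f_n\to f$ in norm. Given $\varepsilon>0$, choose $N$ with $\|f_n-f_m\|_{B_\psi}<\varepsilon$ for all $n,m\ge N$; fixing $n\ge N$ and letting $m\to\infty$, using the pointwise limits $f_m(0)\to f(0)$ and $f_m'(z)\to f'(z)$ just obtained, yields $|f_n(0)-f(0)|\le\varepsilon$ together with $e^{-2\psi(z)}|f_n'(z)-f'(z)|\le\varepsilon$ for every $z$. Taking the supremum over $z$ gives $\|f_n-f\|_{B_\psi}\le 2\varepsilon$, which is the desired norm convergence; and writing $f=f_n-(f_n-f)$ exhibits $f$ as a difference of elements of finite $B_\psi$-norm, so $f\in B_\psi(\mathbb{D})$.

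The main obstacle is the passage from norm-Cauchyness to the existence of a genuine analytic limit: because the norm controls a function only through its value at the single point $0$ together with its weighted derivative, one must first extract a limit for the derivatives $f_n'$ rather than for the $f_n$ themselves. The clean resolution is precisely the local lower bound $m_K>0$ on $e^{-2\psi}$, which converts the weighted control into ordinary local uniform control and lets the standard theory of locally uniform limits of analytic functions deliver both the limit $h$ and its analyticity; recovering $f$ itself afterwards costs only a single integration.
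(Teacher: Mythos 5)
Your proposal is correct and takes essentially the same route as the paper: verify the norm axioms (with positive-definiteness coming from $e^{-2\psi}>0$), extract a locally uniform analytic limit from the Cauchy sequence, and then fix $n$ and let $m\rightarrow\infty$ in the Cauchy condition to obtain both norm convergence and $f\in B_{\psi}(\mathbb{D})$. The only difference is one of order and explicitness: you converge the derivatives $f_n'$ first (making explicit the positive lower bound $m_K$ of $e^{-2\psi}$ on compact sets, which the paper uses only implicitly) and recover $f$ by integration, whereas the paper converges the functions $f_n$ first and obtains $f_n'\rightarrow f'$ via the Cauchy integral formula; your version is, if anything, the more carefully justified of the two.
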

\par
\begin{proof}
At first, we will show that $\|\cdot\|_{B_{\psi}}$ is a norm.
For a function $f\in B_{\psi}(\mathbb{D})$, we define
$$b_{\psi}( f)=\sup_{z\in\mathbb{D}}e^{-2\psi(z)}|f^{'}(z)|.$$
It is easy to see that
$$\|\alpha f\|_{B_{\psi}}=|\alpha f(0)|+b_{\psi}(\alpha f)=|\alpha||f(0)|+|\alpha|b_{\psi}(f)=|\alpha|\| f\|_{B_{\psi}}.$$
\begin{eqnarray*}
\|f+g\|_{B_{\psi}}&=&|f(0)+g(0)|+b_{\psi}(f+g)
\leq|f(0)|+b_{\psi}(f)+|g(0)|+b_{\psi}(g)\\
&=&\| f\|_{B_{\psi}}+\| g\|_{B_{\psi}}.
\end{eqnarray*}
So $\|\cdot\|_{B_{\psi}}$ ~is a semi-norm.\\

Assume that $\|f\|_{B_{\psi}}=|f(0)|+b_{\psi}(f)=0$, we have $f(0)=0$ and $b_{\psi}(f)=0$, which means that
$$\sup_{z\in\mathbb{D}}e^{-2\psi(z)}|f^{'}(z)|=0.$$
 Since $e^{-2\psi(z)}\neq0$ when $z\in\mathbb{D}$, it must be $|f'(z)|\equiv 0$. It follows that $f(z)\equiv C $ for some constant $C$. Since $f(0)=0$, we obtain $C=0$.  It implies that $f=0$.

Now we are going to prove the completeness of $\|\cdot\|_{B_{\psi}}$.
Suppose that~$\{f_{n}\}_{n\in\mathbb{N}}$ is a Cauchy sequence of $B_{\psi}(\mathbb{D})$, i.e. $$\|f_{n}-f_{m}\|_{B_{\psi}}=|f_{n}(0)-f_{m}(0)|+\sup_{z\in\mathbb{D}}e^{-2\psi(z)}|f_{n}^{'}(z)-f_{m}^{'}(z)|\rightarrow 0 ~~~~(n,m\rightarrow\infty).$$

Since
\begin{eqnarray*}
|f_{n}(z)-f_{m}(z)|&=&|\int_{0}^{z}[f_{n}^{'}(t)-f_{m}^{'}(t)]dt+[f_{n}(0)-f_{m}(0)]|\\
&\leq&\int_{0}^{z}|f_{n}^{'}(t)-f_{m}^{'}(t)|dt+|f_{n}(0)-f_{m}(0)|,
\end{eqnarray*}
for each $z\in \mathbb{D}$ , there exists a $f\in H(\mathbb{D})$ such that
$f_{n}$ uniformly converges to $f$ on any compact subsets of ~$\mathbb{D}$~.
By Cauchy integral formula, we have $f^{'}_n$~uniformly converges to ~$f^{'}$~on any compact subsets of ~$\mathbb{D}$~.

Since $b_{\psi}(f_{n}-f_{m})\rightarrow 0$,  for any $\varepsilon>0$, there exists a positive integer $N$, such that when $n,m>N$, we have
$$e^{-2\psi(z)}|f_{n}^{'}(z)-f_{m}^{'}(z)|<\varepsilon,~~~~for ~all ~z\in\mathbb{D}.$$
Letting $m\rightarrow\infty$ in the above inequality, we can see that
$$e^{-2\psi(z)}|f_{n}^{'}(z)-f^{'}(z)|\leq\varepsilon,~~~~for~all~ z\in \mathbb{D},$$
this implies that $b_{\psi}(f_{n}-f)\rightarrow 0~~(n\rightarrow \infty)$.
It follows that $\|f_{n}-f\|_{B_{\psi}}\rightarrow 0~(n\rightarrow\infty).$
It is easy to see that there exists a positive integer $n$ such that for all ~$z\in \mathbb{D}$~ $$e^{-2\psi(z)}|f^{'}(z)|\leq1+e^{-2\psi(z)}|f_{n}^{'}(z)|\leq1+b_{\psi}(f_{n})<\infty.$$
i.e. $f\in B_{\psi}$, thus the completeness is proved.

Therefore, $B_{\psi}(\mathbb{D})$ is a Banach space with the norm $\|\cdot\|_{B_{\psi}}$.
\end{proof}

{\bf Notation:} we write $f(z,w)\sim g(z,w)$ if there exist positive constants $C$ and $C'$ such that $$C g(z,w)\leq f(z,w)\leq C' g(z,w)$$.

\begin{lem}$^{[2]}$
Let $\psi\in\mathcal{D}$, then we have $$K(z,z)e^{-2\psi(z)}\sim(\tau(z))^{-2}=\Delta\psi(z),~~z\in\mathbb{D}.$$
where $K(z,w)$ is the reproducing kernel of $AL_{\psi}^{2}(\mathbb{D})$.
\end{lem}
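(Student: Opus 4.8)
The plan is to base everything on the extremal (variational) description of the kernel on the diagonal,
$$K(z,z)=\sup\Big\{|f(z)|^{2}:f\in AL_{\psi}^{2}(\mathbb{D}),\ \|f\|_{L_{\psi}^{2}}\le 1\Big\},$$
so that the asserted comparison is equivalent to showing that the weighted point evaluation satisfies $\sup_{\|f\|_{L_{\psi}^{2}}\le1}|f(z)|^{2}e^{-2\psi(z)}\sim\tau(z)^{-2}$. The identity $\tau(z)^{-2}=\Delta\psi(z)$ is immediate from the definition $\tau=(\Delta\psi)^{-1/2}$, so only the two-sided estimate by $\tau(z)^{-2}$ requires work. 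Throughout, $D(z,r)$ denotes the Euclidean disc of centre $z$ and radius $r$, and I will work on discs of radius $\delta\tau(z)$ with $\delta$ a small constant; by condition (2) such discs lie well inside $\mathbb{D}$, and by condition (1) the function $\tau$ is comparable to $\tau(z)$ on them, so that $\Delta\psi(w)\sim\Delta\psi(z)$ for $w\in D(z,\delta\tau(z))$.

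For the upper estimate I would first establish a weighted sub-mean-value inequality: there are $\delta>0$ and $C>0$, depending only on $C_{1},C_{2},C_{3}$, with
$$|f(z)|^{2}e^{-2\psi(z)}\le\frac{C}{\tau(z)^{2}}\int_{D(z,\delta\tau(z))}|f(w)|^{2}e^{-2\psi(w)}\,dA(w)$$
for every $f\in H(\mathbb{D})$. The mechanism is to subtract off the non-subharmonic part of $\psi$ near $z$: collecting the holomorphic terms of the second-order jet of $\psi$ at $z$ into the polynomial $H_{z}(w)=2\,\partial\psi(z)(w-z)+\partial^{2}\psi(z)(w-z)^{2}$, the defining properties of $\mathcal{D}$ yield $|\psi(w)-\psi(z)-\operatorname{Re}H_{z}(w)|\le C$ on $D(z,\delta\tau(z))$. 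Since $fe^{-H_{z}}$ is holomorphic, $|fe^{-H_{z}}|^{2}$ is subharmonic and the ordinary area sub-mean-value inequality on a disc of radius $\delta\tau(z)$ applies; restoring the weight through the bounded factor $e^{-2(\psi-\operatorname{Re}H_{z})}$ gives the displayed inequality. Applying it to any $f$ with $\|f\|_{L_{\psi}^{2}}\le1$ and using that the integral is at most $\|f\|_{L_{\psi}^{2}}^{2}$, we obtain $|f(z)|^{2}e^{-2\psi(z)}\le C\tau(z)^{-2}$, hence $K(z,z)e^{-2\psi(z)}\le C\tau(z)^{-2}$.

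For the lower estimate I would produce, for each fixed $z$, a test function concentrated at scale $\tau(z)$. The natural model is $e^{H_{z}}$: on $D(z,\delta\tau(z))$ its weighted modulus behaves like $|e^{H_{z}(w)}|^{2}e^{-2\psi(w)}\sim e^{-2\psi(z)}e^{-\frac12\Delta\psi(z)|w-z|^{2}}$, a Gaussian of width $\tau(z)$, so that a (normalized area) integration gives $\int|e^{H_{z}}|^{2}e^{-2\psi}\,dA\sim\tau(z)^{2}e^{-2\psi(z)}$ while $|e^{H_{z}(z)}|^{2}=1$; the quotient is of order $\tau(z)^{-2}e^{2\psi(z)}$, exactly the lower bound sought. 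Because $e^{H_{z}}$ is not globally $L_{\psi}^{2}$, I would localize it by a smooth cut-off $\chi$ equal to $1$ on $D(z,\delta\tau(z))$ and supported in $D(z,2\delta\tau(z))$, and then recover analyticity by solving $\bar\partial u=(\bar\partial\chi)\,e^{H_{z}}$ with H\"ormander's weighted $L^{2}$-estimate; here the strict subharmonicity $\Delta\psi>0$ furnishes precisely the positivity of the weight that the estimate requires, and it delivers a solution with $u(z)=0$ and $\|u\|_{L_{\psi}^{2}}$ small compared with $\|\chi e^{H_{z}}\|_{L_{\psi}^{2}}$. The corrected function $f_{z}=\chi e^{H_{z}}-u$ then lies in $AL_{\psi}^{2}(\mathbb{D})$, satisfies $|f_{z}(z)|=1$ and $\|f_{z}\|_{L_{\psi}^{2}}^{2}\le C\,\tau(z)^{2}e^{-2\psi(z)}$, whence $K(z,z)\ge|f_{z}(z)|^{2}/\|f_{z}\|_{L_{\psi}^{2}}^{2}\ge c\,\tau(z)^{-2}e^{2\psi(z)}$.

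The main obstacle is the lower bound. Two points need genuine care and are exactly where the full strength of $\psi\in\mathcal{D}$ is used: first, the local comparison $|\psi-\psi(z)-\operatorname{Re}H_{z}|\le C$ on $\tau$-scale discs, uniformly in $z$, which controls the oscillation (curvature) of the weight and underlies both the Gaussian heuristic and the sub-mean-value step; and second, the $\bar\partial$-correction, which must be carried out so that $u$ genuinely vanishes at $z$ and is negligible in norm, so that the concentration of $f_{z}$ at $z$ is not destroyed. The upper bound, once the oscillation of $\psi$ on $\tau$-scale discs is controlled, is a fairly routine consequence of subharmonicity.
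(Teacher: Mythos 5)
Note first that the paper does not prove this lemma at all: it is quoted with a citation to Lin--Rochberg [2], so your reconstruction has to stand on its own merits. Your overall architecture --- the extremal characterization of $K(z,z)$, a pluriharmonic approximation of $\psi$ on discs $D(z,\delta\tau(z))$, a sub-mean-value inequality for the upper bound, and a cut-off plus $\bar\partial$-correction for the lower bound --- is indeed the standard route taken in the literature (see [2] and also [4]). However, two of the specific mechanisms you invoke have genuine gaps.

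The first gap is your claim that the second-order jet $H_{z}(w)=2\partial\psi(z)(w-z)+\partial^{2}\psi(z)(w-z)^{2}$ satisfies $|\psi(w)-\psi(z)-\operatorname{Re}H_{z}(w)|\le C$ on $D(z,\delta\tau(z))$ ``by the defining properties of $\mathcal{D}$.'' This is false in general. The Taylor remainder is governed by the modulus of continuity of the \emph{full} Hessian of $\psi$, whereas the class $\mathcal{D}$ only constrains its trace $\Delta\psi$ (through the Lipschitz property of $\tau$). Concretely, if $\psi\in\mathcal{D}$ and $h$ is \emph{any} holomorphic function on $\mathbb{D}$, then $\psi+\operatorname{Re}h\in\mathcal{D}$ with the same $\tau$, since conditions (1)--(3) see only $\tau$; choosing $h$ with third derivative growing much faster than $\tau^{-3}$ (e.g.\ $h(w)=\exp\bigl(\frac{1+w}{1-w}\bigr)$) destroys the jet approximation on $\tau$-scale discs, while the conclusion of the lemma is unaffected (the map $f\mapsto fe^{h}$ is an isometry between the two Bergman spaces). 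The correct device is the Riesz decomposition: on $D(z,\delta\tau(z))$ write $\psi=\operatorname{Re}H_{z}+G_{z}$, where $G_{z}$ is the Green potential of $\Delta\psi$ over that disc; since $\Delta\psi\sim\tau(z)^{-2}$ there by condition (1), one gets $|G_{z}|\le C\delta^{2}$, and $H_{z}$ is holomorphic because $\psi-G_{z}$ is harmonic. With this replacement your upper bound and your ``Gaussian'' heuristic both go through.

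The second, more serious, gap is the $\bar\partial$-step in the lower bound. Plain H\"ormander with weight $2\psi$ does \emph{not} deliver $u(z)=0$, and the norm it gives is not small: with your normalization $|e^{H_{z}(z)}|=1$ one computes $\|\chi e^{H_{z}}\|_{L_{\psi}^{2}}^{2}\sim(\delta\tau(z))^{2}e^{-2\psi(z)}$ while H\"ormander yields only $\|u\|_{L_{\psi}^{2}}^{2}\lesssim\tau(z)^{2}e^{-2\psi(z)}$, i.e.\ a solution $\delta^{-2}$ times \emph{larger} in norm than the data; the sub-mean-value inequality then bounds $|u(z)|$ only by $C\delta^{-1}$, which can swamp the value $\chi e^{H_{z}}(z)=1$ entirely. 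If instead you force $u(z)=0$ by the standard singular-weight trick (weight $2\psi+2\log|w-z|$), then converting the singularly weighted norm of $u$ back to $\|u\|_{L_{\psi}^{2}}$ costs a factor $\sup_{w\in\mathbb{D}}|w-z|^{2}/(\delta\tau(z))^{2}$, and the final estimate degenerates to $K(z,z)\gtrsim\delta^{2}e^{2\psi(z)}$, losing exactly the factor $\tau(z)^{-2}$ the lemma asserts. What actually closes the argument in the literature is a Donnelly--Fefferman/Berndtsson-type estimate with gain, producing a solution with $\int_{\mathbb{D}}|u|^{2}e^{-2\psi}\Delta\psi\,dA\lesssim\int_{\mathbb{D}}|v|^{2}(\Delta\psi)^{-1}e^{-2\psi}\,dA$: this gives simultaneously the global bound $\|u\|_{L_{\psi}^{2}}^{2}\lesssim\tau(z)^{2}e^{-2\psi(z)}$ (using $\tau(w)\le C_{2}$ from condition (2)) and the local bound $|u(z)|\lesssim\tau(z)/\delta$, so that $|f_{z}(z)|\ge1-C\tau(z)/\delta$ is bounded below once $\tau(z)$ is small; the remaining region $\{\tau(z)\ge\varepsilon\}$ is a compact subset of $\mathbb{D}$ by condition (2) and is handled by continuity and positivity of $K(z,z)e^{-2\psi(z)}$ and of $\Delta\psi$. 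You flagged this step as needing ``genuine care,'' but as written your assertion of what H\"ormander delivers is incorrect, and the sharp lower bound does not follow without some such refined estimate.
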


\begin{lem}
Let $f\in AL_{\psi}^{2}(\mathbb{D})$ and $\psi\in\mathcal{D}$, then $$|f(z)|\leq\sqrt{K(z,z)}\|f\|_{L_{\psi}^{2}}.$$
\end{lem}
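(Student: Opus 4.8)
The plan is to exploit the fact that $AL_{\psi}^{2}(\mathbb{D})$ is a reproducing kernel Hilbert space, so that the value $f(z)$ can be written as an inner product against the kernel and then controlled by the Cauchy--Schwarz inequality. Concretely, since $K(z,w)$ is the reproducing kernel of $AL_{\psi}^{2}(\mathbb{D})$, for each fixed $z\in\mathbb{D}$ the function $K(\cdot,z)$ belongs to $AL_{\psi}^{2}(\mathbb{D})$ and satisfies the reproducing property
$$f(z)=\langle f, K(\cdot,z)\rangle_{L_{\psi}^{2}}=\int_{\mathbb{D}}f(w)\overline{K(w,z)}\,e^{-2\psi(w)}\,dA(w),\qquad f\in AL_{\psi}^{2}(\mathbb{D}).$$

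First I would apply the Cauchy--Schwarz inequality to this integral representation, which immediately yields
$$|f(z)|=\bigl|\langle f, K(\cdot,z)\rangle_{L_{\psi}^{2}}\bigr|\leq\|f\|_{L_{\psi}^{2}}\,\|K(\cdot,z)\|_{L_{\psi}^{2}}.$$
The remaining task is to identify the second factor. Here I would use the reproducing property a second time, now applied to the function $K(\cdot,z)$ itself and evaluated at the point $z$. This gives
$$\|K(\cdot,z)\|_{L_{\psi}^{2}}^{2}=\langle K(\cdot,z),K(\cdot,z)\rangle_{L_{\psi}^{2}}=K(z,z),$$
so that $\|K(\cdot,z)\|_{L_{\psi}^{2}}=\sqrt{K(z,z)}$. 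Substituting back produces the claimed pointwise bound $|f(z)|\leq\sqrt{K(z,z)}\,\|f\|_{L_{\psi}^{2}}$.

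This argument is essentially routine once the reproducing structure is invoked, so there is no serious analytic obstacle; the only point requiring care is justifying the reproducing property itself and the finiteness of $K(z,z)$. The existence of the reproducing kernel follows from the hypothesis that $AL_{\psi}^{2}(\mathbb{D})$ is a closed subspace of the Hilbert space $L_{\psi}^{2}(\mathbb{D})$ together with the fact that point evaluations are bounded linear functionals on it; the boundedness of point evaluations is in turn guaranteed, and $K(z,z)$ is shown to be finite, by Lemma 2.2, which gives the quantitative estimate $K(z,z)e^{-2\psi(z)}\sim(\tau(z))^{-2}$. Thus the only genuinely nontrivial input is already supplied by the preceding lemma, and the proof reduces to the two applications of the reproducing identity combined with Cauchy--Schwarz.
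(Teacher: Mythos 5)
Your proof is correct and is essentially identical to the paper's: the paper also writes $|f(z)|=|\langle f,K_{z}\rangle|\leq\|f\|_{L_{\psi}^{2}}\|K_{z}\|_{L_{\psi}^{2}}=\sqrt{K(z,z)}\|f\|_{L_{\psi}^{2}}$, i.e., the reproducing property followed by Cauchy--Schwarz and the identity $\|K_{z}\|_{L_{\psi}^{2}}^{2}=K(z,z)$. Your extra remarks on why the kernel exists and why $K(z,z)$ is finite are sound but go beyond what the paper records.
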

\par
\begin{proof}
$|f(z)|=|\langle f,K_{z}\rangle|\leq\|f\|_{L_{\psi}^{2}}\|K_{z}\|_{L_{\psi}^{2}}=\sqrt{K(z,z)}\|f\|_{L_{\psi}^{2}}$.
\end{proof}

\begin{lem}
Let $\varphi$ be an analytic self-map of the unit disk $\mathbb{D}$ and $g\in H(\mathbb{D})$. Assume that
$V_{\varphi}^{g}:AL_{\psi}^{2}(\mathbb{D})\rightarrow B_{\psi}(\mathbb{D})$ is bounded. Then $V_{\varphi}^{g}$ is compact if and only if $V_{\varphi}^{g}f_{k}$ converges to zero as $k\rightarrow\infty$, for any bounded sequence $\{f_{k}\}_{k\in\mathbb{N}}$ in $AL_{\psi}^{2}(\mathbb{D})$ which uniformly converges to zero on compact subsets of $\mathbb{D}$.
\end{lem}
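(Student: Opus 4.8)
The plan is to use the standard sequential characterization of compactness: the bounded operator $V_{\varphi}^{g}$ is compact if and only if it carries every norm-bounded sequence in $AL_{\psi}^{2}(\mathbb{D})$ to a sequence in $B_{\psi}(\mathbb{D})$ possessing a norm-convergent subsequence. For the necessity direction I would assume $V_{\varphi}^{g}$ is compact and take a bounded sequence $\{f_{k}\}$ in $AL_{\psi}^{2}(\mathbb{D})$ converging to $0$ uniformly on compact subsets. Arguing by contradiction, if $\|V_{\varphi}^{g}f_{k}\|_{B_{\psi}}\not\to 0$ then some subsequence stays bounded below by a fixed $\delta>0$, and compactness yields a further subsequence $\{V_{\varphi}^{g}f_{k_{j}}\}$ converging in the $B_{\psi}$-norm to some $h\in B_{\psi}(\mathbb{D})$. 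The decisive point is that $B_{\psi}$-norm convergence forces pointwise convergence of the weighted derivatives and of the values at the origin: since $(V_{\varphi}^{g}f)(0)=0$ and $(V_{\varphi}^{g}f)'(z)=f(\varphi(z))g'(z)$, one obtains $h(0)=0$ and $e^{-2\psi(z)}\bigl|f_{k_{j}}(\varphi(z))g'(z)-h'(z)\bigr|\to 0$ for each fixed $z\in\mathbb{D}$. Because $\varphi(z)\in\mathbb{D}$ for every fixed $z$ and $f_{k}\to 0$ locally uniformly, $f_{k_{j}}(\varphi(z))\to 0$, so $h'\equiv 0$ and hence $h=0$, contradicting $\|h\|_{B_{\psi}}\ge\delta$.

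For the sufficiency direction I would begin with an arbitrary sequence $\{f_{k}\}$ bounded by some $M$ in $AL_{\psi}^{2}(\mathbb{D})$ and construct a norm-convergent subsequence of its image. The pointwise estimate $|f(z)|\le\sqrt{K(z,z)}\,\|f\|_{L_{\psi}^{2}}$ of the preceding lemma shows that $\{f_{k}\}$ is uniformly bounded on each compact subset of $\mathbb{D}$, so Montel's theorem produces a subsequence $\{f_{k_{j}}\}$ converging locally uniformly to some $f\in H(\mathbb{D})$; applying Fatou's lemma to $\int_{\mathbb{D}}|f|^{2}e^{-2\psi}\,dA$ gives $\|f\|_{L_{\psi}^{2}}\le M$, so $f\in AL_{\psi}^{2}(\mathbb{D})$. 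Then $\{f_{k_{j}}-f\}$ is bounded in $AL_{\psi}^{2}(\mathbb{D})$ and tends to $0$ uniformly on compacta, so the hypothesis gives $V_{\varphi}^{g}(f_{k_{j}}-f)\to 0$ in $B_{\psi}$, that is $V_{\varphi}^{g}f_{k_{j}}\to V_{\varphi}^{g}f$ in the $B_{\psi}$-norm. This provides the required convergent subsequence and establishes compactness.

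I expect the main obstacle to be the careful transfer between the two modes of convergence. In the necessity direction one must argue rigorously that $B_{\psi}$-norm convergence implies pointwise convergence of the terms $e^{-2\psi(z)}|(V_{\varphi}^{g}f_{k_{j}})'(z)|$, and in the sufficiency direction one must be sure that the locally uniform Montel limit actually lies in $AL_{\psi}^{2}(\mathbb{D})$ with norm no larger than $M$. Both steps rest on the growth estimate $|f(z)|\le\sqrt{K(z,z)}\,\|f\|_{L_{\psi}^{2}}$ together with the identity $(V_{\varphi}^{g}f)'=f(\varphi)\,g'$, so I would isolate and verify these two facts first and then feed them into each direction of the equivalence.
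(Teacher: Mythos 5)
Your proposal is correct and takes essentially the same route as the paper: the sufficiency direction uses the pointwise estimate $|f(z)|\le\sqrt{K(z,z)}\,\|f\|_{L_{\psi}^{2}}$, Montel's theorem, and Fatou's lemma to produce a limit $f\in AL_{\psi}^{2}(\mathbb{D})$ and then applies the hypothesis to $f_{k_{j}}-f$, exactly as in the paper, while the necessity direction rests on the same transfer from $B_{\psi}$-norm convergence to pointwise convergence of $(V_{\varphi}^{g}f_{k})(0)$ and the weighted derivatives, forcing the limit $h$ to vanish. Your contradiction framing (extracting a subsequence bounded below by $\delta$ before invoking compactness) is in fact a slightly more careful rendering of the paper's ``without loss of generality'' passage to a subsequence, but the underlying argument is identical.
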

\par
\begin{proof}

First, assume that $V_{\varphi}^{g}$ is compact. Let $\{f_{k}\}_{k\in\mathbb{N}}$ be any bounded sequence in $AL_{\psi}^{2}(\mathbb{D})$ and uniformly converges to zero on compact subsets of $\mathbb{D}$. Since $V_{\varphi}^{g}$ is a compact operator , there exists a subsequence of $\{f_{k}\}_{k\in\mathbb{N}}$~(~without loss of generality, we assume it is $\{f_{k}\}_{k\in\mathbb{N}}$~)~ and a function $h\in B_{\psi}(\mathbb{D})$ such that$$\|V_{\varphi}^{g}f_{k}-h\|_{B_{\psi}}\rightarrow0 (k\rightarrow\infty).$$ That is, $$|V_{\varphi}^{g}f_{k}(0)-h(0)|+\underset {z\in\mathbb{D}}{\textup{sup}}e^{-2\psi(z)}|f_{k}(\varphi(z))g^{'}(z)-h^{'}(z)|\rightarrow0(k\rightarrow\infty).$$
By the definition of $V_{\varphi}^{g}$, it is obvious that $V_{\varphi}^{g}f_{k}(0)=0$. It follows that $h(0)=0.$
It is not difficult to see that $f_{k}(\varphi(z))g^{'}(z)$ uniformly converges to $h^{'}(z)$ on any compact subsets of $\mathbb{D}$.
Besides, $\{f_{k}\}_{k\in\mathbb{N}}$ uniformly converges to zero on any compact subsets of $\mathbb{D}$ and $g\in H(\mathbb{D})$,
it follows that $h^{'}(z)\equiv0.$ Also, as $h(0)=0$, we have $h(z)\equiv0$, that is $$\|V_{\varphi}^{g}f_{k}\|_{B_{\psi}}\rightarrow0 (k\rightarrow\infty).$$

Conversely, let $\{f_{k}\}_{k\in\mathbb{N}}$ be any bounded sequence in $AL_{\psi}^{2}(\mathbb{D})$. By Lemma 2.3, we have $|f_{k}(z)|\leq\sqrt{K(z,z)}\|f_{k}\|_{L_{\psi}^{2}}.$
therefore, $\{f_{k}\}_{k\in\mathbb{N}}$ is uniformly bounded on any compact subset $K$
of $\mathbb{D}$.

By Montel Theorem, there exists a subsequence $\{f_{k_{n}}\}_{k_n\in\mathbb{N}}$ of $\{f_{k}\}_{k\in\mathbb{N}}$ and an analytic function $f$, such that $\{f_{k_{n}}\}$ converges to
$f$ uniformly on any compact subsets $K$ of the unit disk $\mathbb{D}$.

According to the Fatou Lemma, $f\in AL_{\psi}^{2}(\mathbb{D})$. Together with the assumption, we have $\|V_{\varphi}^{g}(f_{k_{n}}-f)\|_{B_{\psi}}\rightarrow0 ~(n\rightarrow\infty).$  Therefore,
$\{V_{\varphi}^{g}f_{k}\}_{k\in\mathbb{N}}$ has subsequence $\{V_{\varphi}^{g}f_{k_{n}}\}_{n\in\mathbb{N}}$ which converges to $V_{\varphi}^{g}f\in AL_{\psi}^2(\mathbb{D})$. From the characterization of compact operators, we see that $V_{\varphi}^{g}:AL_{\psi}^{2}(\mathbb{D})\rightarrow B_{\psi}(\mathbb{D})$ is compact.
\end{proof}

\begin{lem}
Let $\psi\in\mathcal{D}$, then the bounded closed set $K$ of $B_{\psi,0}(\mathbb{D})$ is compact if and only if $\underset {|z|\rightarrow1}{\textup{lim}}\underset {f\in K}{\textup{sup}}e^{-2\psi(z)}|f^{'}(z)|=0.$
\end{lem}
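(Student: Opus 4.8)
The plan is to prove this Arzel\`a--Ascoli--type criterion by exploiting that $B_{\psi,0}(\mathbb{D})$ is a closed subspace of the Banach space $B_{\psi}(\mathbb{D})$ (Lemma~2.1), so that a closed bounded set $K$ is compact precisely when it is sequentially compact. I would therefore reduce both implications to extracting, or controlling, norm-convergent subsequences, using the norm $\|f\|_{B_{\psi}}=|f(0)|+\sup_{z}e^{-2\psi(z)}|f'(z)|$ throughout.

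For the sufficiency direction, assume $\lim_{|z|\to 1}\sup_{f\in K}e^{-2\psi(z)}|f'(z)|=0$ and take an arbitrary sequence $\{f_{n}\}\subseteq K$. Since $K$ is bounded, $\|f_{n}\|_{B_{\psi}}\le C$, so $e^{-2\psi(z)}|f_{n}'(z)|\le C$ and hence $|f_{n}'(z)|\le C\,e^{2\psi(z)}$; because $\psi$ is continuous, $e^{2\psi}$ is bounded on each disk $\{|z|\le r\}$, and together with $|f_{n}(0)|\le C$ this gives a uniform bound for $|f_{n}|$ on compact subsets by integrating along a radius. By Montel's theorem some subsequence $\{f_{n_{k}}\}$ converges uniformly on compacta to an analytic $f$, and by the Cauchy integral formula $f_{n_{k}}'\to f'$ uniformly on compacta as well. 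To upgrade this to norm convergence I would split the supremum at a radius $r$: given $\varepsilon>0$, choose $r$ so that $\sup_{g\in K}e^{-2\psi(z)}|g'(z)|<\varepsilon$ whenever $|z|>r$; passing to the pointwise limit shows that $f$ inherits the same bound, so for $|z|>r$ the quantity $e^{-2\psi(z)}|f_{n_{k}}'(z)-f'(z)|$ is dominated by $2\varepsilon$, while on the compact set $\{|z|\le r\}$ the uniform convergence of $f_{n_{k}}'$ together with the boundedness of $e^{-2\psi}$ makes $\sup_{|z|\le r}e^{-2\psi(z)}|f_{n_{k}}'(z)-f'(z)|$ tend to $0$. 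Adding $|f_{n_{k}}(0)-f(0)|\to 0$ yields $\|f_{n_{k}}-f\|_{B_{\psi}}\to 0$. The inherited boundary bound also shows $f\in B_{\psi,0}(\mathbb{D})$, and since $K$ is closed we get $f\in K$; thus $K$ is sequentially compact.

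For the necessity direction I would argue by contradiction. If the stated limit fails, there exist $\varepsilon_{0}>0$, points $z_{j}$ with $|z_{j}|\to 1$, and functions $f_{j}\in K$ with $e^{-2\psi(z_{j})}|f_{j}'(z_{j})|\ge \varepsilon_{0}$. Compactness of $K$ yields a subsequence with $f_{j}\to f$ in $\|\cdot\|_{B_{\psi}}$ for some $f\in K\subseteq B_{\psi,0}(\mathbb{D})$. Then the estimate $e^{-2\psi(z_{j})}|f_{j}'(z_{j})|\le \|f_{j}-f\|_{B_{\psi}}+e^{-2\psi(z_{j})}|f'(z_{j})|$ forces the left-hand side to $0$, since the first term vanishes by norm convergence and the second by the little-oh property of $f$ at the boundary, contradicting the lower bound $\varepsilon_{0}$.

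The routine parts are the normal-family estimate and the two triangle inequalities; the one genuinely delicate point, which I would treat with care, is the boundary estimate in the sufficiency direction, namely verifying that the limit function $f$ satisfies the same uniform boundary decay as the members of $K$, so that $\sup_{|z|>r}e^{-2\psi(z)}|f_{n_{k}}'(z)-f'(z)|$ can be controlled uniformly in $k$ rather than only for each fixed $z$.
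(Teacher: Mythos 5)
Your proposal is correct, and it splits naturally into one direction that matches the paper and one that genuinely differs. The sufficiency direction (boundary condition implies compactness) is essentially the paper's own argument: Montel plus the Cauchy integral formula to get locally uniform convergence of a subsequence and of its derivatives, then splitting the Bloch seminorm at a radius $r$, with the key point in both proofs being that the limit function inherits the uniform bound $e^{-2\psi(z)}|f'(z)|\le\varepsilon$ on $|z|>r$ pointwise, so the tail supremum is controlled uniformly in the index; closedness of $K$ then finishes. (One small point in your favor: the paper justifies local uniform boundedness of the sequence by invoking its Lemma~2.3, which is a pointwise bound for functions in $AL_{\psi}^{2}(\mathbb{D})$, not for Bloch-type functions; your argument of integrating $|f_{n}'(z)|\le C e^{2\psi(z)}$ from the origin, using continuity of $\psi$ on compacta, is the correct justification at that spot.) The necessity direction is where you diverge: the paper covers the compact set $K$ by $\varepsilon/2$-balls, extracts a finite net $f_{1},\dots,f_{n}$, and takes $r=\max r_{j}$ beyond which each $e^{-2\psi(z)}|f_{j}'(z)|<\varepsilon/2$, obtaining the uniform boundary decay directly from total boundedness; you instead argue by contradiction via sequential compactness, extracting from witnesses $z_{j}$, $f_{j}$ a norm-convergent subsequence and using the little-oh property of the limit function in the estimate $e^{-2\psi(z_{j})}|f_{j}'(z_{j})|\le\|f_{j}-f\|_{B_{\psi}}+e^{-2\psi(z_{j})}|f'(z_{j})|$. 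Both are standard and equally elementary in a metric space; the paper's finite-net argument is constructive and exhibits the radius explicitly, while yours is shorter and concentrates all the work in a single triangle inequality. Either way the lemma is established.
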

\begin{proof}
Assume that the bounded closed set $K$ of $B_{\psi,0}(\mathbb{D})$ is compact. Let $\varepsilon>0$, we have $\underset{f\in K}{\bigcup}B(f,\frac{\varepsilon}{2})\supset K$, where $B(f,\frac{\varepsilon}{2})$ is a ball with center $f$ and radius $\frac{\varepsilon}{2}$.
Because $K$ is compact, there exist $f_{1},f_{2},\cdot\cdot\cdot,f_{n}\in K$, such that for any $f\in K$, we have
$$\|f-f_{j}\|_{B_{\psi}}<\frac{\varepsilon}{2} ~~~~(1\leq j\leq n,j\in\mathbb{N}^{*}).$$
Hence, for any $z\in\mathbb{D}$, we have, for $1\leq j\leq n$,
$$e^{-2\psi(z)}|f^{'}(z)|\leq e^{-2\psi(z)}|f_{j}^{'}(z)|+\frac{\varepsilon}{2}.$$

 Since $f_{j}\in B_{\psi,0}(\mathbb{D})$, for each positive integer $j$, there exists a positive real number $r_{j}\in(0,1)$ such that $e^{-2\psi(z)}|f_{j}^{'}(z)|<\frac{\varepsilon}{2}$ when $r_{j}<|z|<1$. Let $r=\textup{max}\{r_{1},r_{2},\cdot\cdot\cdot,r_{n}\}$, then we have, when $r<|z|<1$,
 $$e^{-2\psi(z)}|f^{'}(z)|\leq\varepsilon.$$
 Namely, $$\underset {|z|\rightarrow1}{\textup{lim}}\underset {f\in K}{\textup{sup}}e^{-2\psi(z)}|f^{'}(z)|=0.$$

Conversely, suppose that the sequence $\{f_{n}\}_{n\in\mathbb{N}}\subset K$. Since $K$ is bounded, there is a positive constant $M$ such that for any $n\in \mathbb{Z^{+}} $, $\|f_{n}\|_{B_{\psi}}\leq M$.
By Lemma 2.3, it is easy to see $\{f_{n}\}_{n\in\mathbb{N}}$ is uniformly bounded on any compact subsets of $\mathbb{D}$.
By Montel Theorem, there exists subsequence of $\{f_{n}\}_{n\in\mathbb{N}}$ ~(~without loss of generality , we suppose it to be $\{f_{n}\}_{n\in\mathbb{N}}$~)~and an analytic function $f$, such that $\{f_{n}\}_{n\in\mathbb{N}}$ converges to $f$ uniformly on any compact subsets of $\mathbb{D}$.

Since
$$\underset {|z|\rightarrow1}{\textup{lim}}\underset {f\in K}{\textup{sup}}e^{-2\psi(z)}|f^{'}(z)|=0$$ and $\{f_{n}\}_{n\in\mathbb{N}}\subseteq K$, for any $\varepsilon>0$,  there exists an $r\in(0,1)$, such that $e^{-2\psi(z)}|f_{n}^{'}(z)|<\frac{\varepsilon}{2}$ for all $n$, when $r<|z|<1$.
Furthermore, $e^{-2\psi(z)}|f^{'}(z)|<\frac{\varepsilon}{2}$ when $r<|z|<1$, therefore we have
$$e^{-2\psi(z)}|f_{n}^{'}(z)-f^{'}(z)|<\varepsilon$$ when $r<|z|<1$.

 Combines with Cauchy Integral Theorem, $f_{n}^{'}$ converges to $f^{'}$ uniformly on any compact subset of $\mathbb{D}$. Note that $e^{-2\psi(z)}$ is continuous on $r\overline{\mathbb{D}}$, hence for any $z\in r\overline{\mathbb{D}}$, we have $|e^{-2\psi(z)}|\leq C$ for some constant $C$. As $f_{n}^{'}$ converges to $f^{'}$ uniformly on $r\overline{\mathbb{D}}$, there exists an integer $G$, when $n\geq G$, $|f_{n}^{'}(z)-f^{'}(z)|<\frac{\varepsilon}{C}$ holds for any $z\in r\overline{\mathbb{D}}$.

Thus, it is easy to see, when $n\geq G$, that $e^{-2\psi(z)}|f_{n}^{'}(z)-f^{'}(z)|<\varepsilon$ holds for any $z\in \mathbb{D}$. This implies $\|f_{n}-f\|_{B_{\psi}}\rightarrow0~~(n\rightarrow\infty).$  Since $K$ is closed, we have $f\in K$, that means $K$ is compact.
\end{proof}

\section{Main Results}
\begin{thm}
Let $\varphi$ be an analytic self-map of the unit disk $\mathbb{D}$ and $g\in H(\mathbb{D})$, $\psi\in\mathcal{D}$. Then $V_{\varphi}^{g}: AL_{\psi}^{2}(\mathbb{D})\rightarrow B_{\psi}(\mathbb{D})$ is bounded if and only if $$\sup_{z\in\mathbb{D}}e^{\psi(\varphi(z))-2\psi(z)}\sqrt{\Delta\psi(\varphi(z))}|g'(z)|<\infty.$$
\end{thm}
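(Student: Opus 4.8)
The plan is to base both implications on the identity
\[
\|V_{\varphi}^{g}f\|_{B_{\psi}} = \sup_{z\in\mathbb{D}} e^{-2\psi(z)}\,\bigl|f(\varphi(z))g'(z)\bigr|,
\]
which holds because $V_{\varphi}^{g}f(0)=0$ kills the point-evaluation term in the norm, while $(V_{\varphi}^{g}f)'(z)=f(\varphi(z))g'(z)$ supplies the derivative term. The bridge between this quantity and the supremum in the statement is Lemma 2.2, which, rewritten as $\sqrt{K(w,w)}\sim e^{\psi(w)}\sqrt{\Delta\psi(w)}$, converts the pointwise size of the reproducing kernel into exactly the weight factor $e^{\psi(\varphi(z))}\sqrt{\Delta\psi(\varphi(z))}$ that appears in the condition.

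For the sufficiency direction, I would assume the supremum $S$ is finite and apply the pointwise bound of Lemma 2.3 at the point $\varphi(z)$, namely $|f(\varphi(z))|\le \sqrt{K(\varphi(z),\varphi(z))}\,\|f\|_{L_{\psi}^{2}}$. Replacing $\sqrt{K(\varphi(z),\varphi(z))}$ by a constant multiple of $e^{\psi(\varphi(z))}\sqrt{\Delta\psi(\varphi(z))}$ via Lemma 2.2 and inserting this into the norm identity gives
\[
\|V_{\varphi}^{g}f\|_{B_{\psi}} \le C\,S\,\|f\|_{L_{\psi}^{2}},
\]
so that $V_{\varphi}^{g}$ is bounded. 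This is essentially a direct concatenation of the two lemmas and should be routine.

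For the necessity direction, I would assume $V_{\varphi}^{g}$ is bounded with norm $M$ and test against normalized reproducing kernels. Fixing $a\in\mathbb{D}$ and setting $f=K(\cdot,\varphi(a))/\sqrt{K(\varphi(a),\varphi(a))}$, one has $\|f\|_{L_{\psi}^{2}}=1$ and $f(\varphi(a))=\sqrt{K(\varphi(a),\varphi(a))}$. Bounding the $B_{\psi}$ norm below by its value at $z=a$ and using Lemma 2.2 once more yields
\[
M \ge \|V_{\varphi}^{g}f\|_{B_{\psi}} \ge c\,e^{\psi(\varphi(a))-2\psi(a)}\sqrt{\Delta\psi(\varphi(a))}\,|g'(a)|,
\]
and since $a$ is arbitrary the supremum is bounded by $M/c$.

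The main obstacle is the necessity half: one must produce, for each target point $\varphi(a)$, a test function that is simultaneously normalized in $AL_{\psi}^{2}(\mathbb{D})$ and whose value at $\varphi(a)$ is comparable to $\sqrt{K(\varphi(a),\varphi(a))}$. The normalized reproducing kernel is the natural and essentially forced choice, and its success hinges entirely on the two-sided kernel estimate of Lemma 2.2; without that comparison the lower bound would not recover the $\sqrt{\Delta\psi(\varphi(a))}$ factor demanded by the statement.
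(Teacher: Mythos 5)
Your proposal is correct and is essentially identical to the paper's own proof: both directions rest on the same ingredients, namely the pointwise bound of Lemma 2.3 combined with the kernel estimate of Lemma 2.2 for sufficiency, and testing against the normalized reproducing kernel $K(\cdot,\varphi(a))/\sqrt{K(\varphi(a),\varphi(a))}$ evaluated at $z=a$ for necessity. The only cosmetic difference is that the paper first fixes $w$ and then sets $w=\varphi(z)$, whereas you fix $a$ and choose the kernel at $\varphi(a)$ from the start; the argument is the same.
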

\begin{proof}
By Lemma 2.2, $$e^{\psi(\varphi(z))}\sqrt{\Delta\psi(\varphi(z))}\sim \sqrt{K(\varphi(z),\varphi(z))},$$
therefore, it suffices for us to prove that
$V_{\varphi}^{g}$ is bounded if and only if $$\underset {z\in\mathbb{D}}{\textup{sup}}\sqrt{K(\varphi(z),\varphi(z))}e^{-2\psi(z)}|g'(z)|<\infty.$$

First, assume that $V_{\varphi}^{g}:AL_{\psi}^{2}(\mathbb{D})\rightarrow B_{\psi}(\mathbb{D})$ is bounded. For every $w\in\mathbb{D}$, let $f_{w}(z)=k_{w}(z)=\frac{K(z,w)}{\sqrt{K(w,w)}}.$ It is easy to check that $f_{w}\in AL_{\psi}^{2}(\mathbb{D})$ and  $\|f_{w}\|_{L_{\psi}^{2}}=1$ for any $w\in\mathbb{D}$.

Hence, for a fixed $w\in \mathbb{D}$,
\begin{eqnarray*}
|f_w(\varphi(z))||g'(z)|e^{-2\psi(z)}
&=&|(V_{\varphi}^{g}f_{w})^{'}(z)|e^{-2\psi(z)}\\
&\leq&\|V_{\varphi}^{g}f_{w}\|_{B_{\psi}}\\
&\leq&\|V_{\varphi}^{g}\|\|f_{w}\|_{L_{\psi}^{2}}\\
&=&\|V_{\varphi}^{g}\|< \infty
\end{eqnarray*}
for any $z\in \mathbb{D}.$
Noticing that
$$f_w(\varphi(z))=\frac{K(\varphi(z),w)}{\sqrt{K(w,w)}},~~~~\mbox{for all}~z\in \mathbb{D}, $$
by setting $w=\varphi(z)$, we have
$$f_{\varphi(z)}(\varphi(z))=\frac{K(\varphi(z),\varphi(z))}{\sqrt{K(\varphi(z),\varphi(z))}}=\sqrt{K(\varphi(z),\varphi(z))}.$$
It follows that
 $$\underset {z\in\mathbb{D}}{\textup{sup}}\sqrt{K(\varphi(z),\varphi(z))}|g'(z)|e^{-2\psi(z)}<\infty.$$

Conversely, assume that $$\underset {z\in\mathbb{D}}{\textup{sup}}\sqrt{K(\varphi(z),\varphi(z))}|g'(z)|e^{-2\psi(z)}=M<\infty.$$
For any $f\in AL_{\psi}^{2}(\mathbb{D})$. By Lemma 2.3, we have
\begin{eqnarray*}
\|V_{\varphi}^{g}f\|_{B_{\psi}}&=&|V_{\varphi}^{g}f(0)|+\underset {z\in\mathbb{D}}{\textup{sup}}|(V_{\varphi}^{g}f)'(z)|e^{-2\psi(z)}\\
&=&\underset {z\in\mathbb{D}}{\textup{sup}}|f(\varphi(z))g'(z)|e^{-2\psi(z)}\\
&\leq&\underset {z\in\mathbb{D}}{\textup{sup}}\|f\|_{L_{\psi}^{2}}\sqrt{K(\varphi(z),\varphi(z))}e^{-2\psi(z)}|g'(z)|\\
&=&\|f\|_{L_{\psi}^{2}}M.
\end{eqnarray*}
Therefore, $V_{\varphi}^{g}$ is bounded.
\end{proof}

\begin{thm}
Let $\varphi$ be an analytic self-map of the unit disk $\mathbb{D}$ and $g\in H(\mathbb{D})$, $\psi\in\mathcal{D}$. Then $V_{\varphi}^{g}: AL_{\psi}^{2}(\mathbb{D})\rightarrow B_{\psi,0}(\mathbb{D})$ is bounded if and only if $V_{\varphi}^{g}: AL_{\psi}^{2}(\mathbb{D})\rightarrow B_{\psi}(\mathbb{D})$ is bounded and $$\lim_{|z| \rightarrow 1}e^{-2\psi(z)}|g'(z)|=0.$$
\end{thm}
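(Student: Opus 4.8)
The plan is to prove the two implications separately, with the forward direction being essentially immediate and the converse resting on a density argument that reduces the problem to polynomials.

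For necessity ($\Rightarrow$), suppose $V_{\varphi}^{g}\colon AL_{\psi}^{2}(\mathbb{D})\to B_{\psi,0}(\mathbb{D})$ is bounded. Since $B_{\psi,0}(\mathbb{D})$ is a subspace of $B_{\psi}(\mathbb{D})$ carrying the same norm $\|\cdot\|_{B_{\psi}}$, the operator is trivially bounded as a map into $B_{\psi}(\mathbb{D})$, with the same operator norm. To extract the little-oh condition, I would test the operator on the constant function $f\equiv 1$, which lies in $AL_{\psi}^{2}(\mathbb{D})$ because the polynomials are assumed to be dense there. Then $(V_{\varphi}^{g}1)'(z)=1\cdot g'(z)=g'(z)$, and since $V_{\varphi}^{g}1\in B_{\psi,0}(\mathbb{D})$ by hypothesis, the defining limit condition of $B_{\psi,0}(\mathbb{D})$ gives exactly $\lim_{|z|\to 1}e^{-2\psi(z)}|g'(z)|=0$.

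For sufficiency ($\Leftarrow$), the key structural fact I would first record is that $B_{\psi,0}(\mathbb{D})$ is a \emph{closed} subspace of $B_{\psi}(\mathbb{D})$: if $h_{n}\to h$ in $\|\cdot\|_{B_{\psi}}$ with each $h_{n}\in B_{\psi,0}(\mathbb{D})$, then combining the uniform smallness of $e^{-2\psi}|h_{n}'-h'|$ with the tail decay of $e^{-2\psi}|h_{n}'|$ shows $h\in B_{\psi,0}(\mathbb{D})$. Next I would verify the claim on polynomials: for a polynomial $p$, the composition $p\circ\varphi$ is uniformly bounded on $\mathbb{D}$ by $\max_{|w|\le 1}|p(w)|$, since $\varphi$ is a self-map of the disk; hence $e^{-2\psi(z)}|p(\varphi(z))||g'(z)|\le \|p\|_{\infty}\,e^{-2\psi(z)}|g'(z)|\to 0$ as $|z|\to 1$, by the assumed little-oh condition on $g'$. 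Thus $V_{\varphi}^{g}p\in B_{\psi,0}(\mathbb{D})$ for every polynomial $p$.

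To pass to arbitrary $f\in AL_{\psi}^{2}(\mathbb{D})$, I would invoke density: choose polynomials $p_{n}\to f$ in $AL_{\psi}^{2}(\mathbb{D})$. Boundedness of $V_{\varphi}^{g}$ into $B_{\psi}(\mathbb{D})$ yields $\|V_{\varphi}^{g}p_{n}-V_{\varphi}^{g}f\|_{B_{\psi}}\le \|V_{\varphi}^{g}\|\,\|p_{n}-f\|_{L_{\psi}^{2}}\to 0$, so $V_{\varphi}^{g}f$ is a $\|\cdot\|_{B_{\psi}}$-limit of elements of the closed subspace $B_{\psi,0}(\mathbb{D})$ and therefore lies in $B_{\psi,0}(\mathbb{D})$; boundedness as a map into $B_{\psi,0}(\mathbb{D})$ is then automatic since the norms coincide. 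The main obstacle is precisely this sufficiency direction: one cannot estimate $e^{-2\psi(z)}|f(\varphi(z))||g'(z)|$ directly for a general $f$, because $|f(\varphi(z))|$ may blow up as $|\varphi(z)|\to 1$ while $e^{-2\psi}|g'|$ only tends to zero. The density-plus-closedness argument is exactly what circumvents this, reducing everything to the polynomial case, where $f\circ\varphi$ is genuinely bounded on all of $\mathbb{D}$.
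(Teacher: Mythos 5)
Your proposal is correct and follows essentially the same route as the paper: necessity by testing on the constant function $f\equiv 1$ (whose image must satisfy the $B_{\psi,0}$ limit condition, and whose derivative under $V_{\varphi}^{g}$ is exactly $g'$), and sufficiency by showing $V_{\varphi}^{g}p\in B_{\psi,0}(\mathbb{D})$ for every polynomial $p$ via the bound $\sup_{z\in\mathbb{D}}|p(\varphi(z))|<\infty$, then transferring to all of $AL_{\psi}^{2}(\mathbb{D})$ by density of polynomials, boundedness into $B_{\psi}(\mathbb{D})$, and closedness of $B_{\psi,0}(\mathbb{D})$ in $B_{\psi}(\mathbb{D})$. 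The only difference is cosmetic: you spell out why $B_{\psi,0}(\mathbb{D})$ is closed, a fact the paper asserts without proof.
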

\begin{proof}
Assume that $V_{\varphi}^{g}: AL_{\psi}^{2}(\mathbb{D})\rightarrow B_{\psi,0}(\mathbb{D})$ is bounded. It is clear that
$V_{\varphi}^{g}: AL_{\psi}^{2}(\mathbb{D})\rightarrow B_{\psi}(\mathbb{D})$ is bounded. Taking $f(z)=1\in AL_{\psi}^{2}(\mathbb{D})$ and $V_{\varphi}^{g}f\in B_{\psi,0}(\mathbb{D})$, then
\begin{eqnarray*}
0=\underset {|z|\rightarrow1}{\textup{lim}}|(V_{\varphi}^{g}f)^{'}(z)|e^{-2\psi(z)}&=&\underset {|z|\rightarrow1}{\textup{lim}}|f(\varphi(z))||g'(z)|e^{-2\psi(z)}\\
&=&\underset {|z|\rightarrow1}{\textup{lim}}|g'(z)|e^{-2\psi(z)}
\end{eqnarray*}

Conversely, suppose that $V_{\varphi}^{g}: AL_{\psi}^{2}(\mathbb{D})\rightarrow B_{\psi}(\mathbb{D})$ is bounded and $\lim_{|z| \rightarrow 1}e^{-2\psi(z)}|g'(z)|=0$. For each polynomial $p(z)$, the following inequality holds
\begin{eqnarray*}
|(V_{\varphi}^{g}p)^{'}(z)|e^{-2\psi(z)}&=&|p(\varphi(z))||g'(z)|e^{-2\psi(z)}\\
&\leq& M_{p}|g'(z)|e^{-2\psi(z)}.
\end{eqnarray*}
where $M_{p}=\underset {z\in\mathbb{D}}{\textup{sup}}|p(z)|$. Since $M_{p}<\infty$ and $\underset {|z|\rightarrow1}{\textup{lim}}|g'(z)|e^{-2\psi(z)}=0$, then
$$\underset {|z|\rightarrow1}{\textup{lim}}|(V_{\varphi}^{g}p)^{'}(z)|e^{-2\psi(z)}=0.$$
That means for each polynomial $p$, $V_{\varphi}^{g}p(z)\in B_{\psi,0}(\mathbb{D})$. Since the set consisting of  polynomials is dense in $AL_{\psi}^{2}(\mathbb{D})$, for every $f\in AL_{\psi}^{2}(\mathbb{D})$, there is a sequence of polynomials $\{p_{k}\}_{k\in\mathbb{N}}$ such that
$$\|f-p_{k}\|_{L_{\psi}^{2}}\rightarrow 0~~~~(k\rightarrow\infty).$$
Hence,$$\|V_{\varphi}^{g}f-V_{\varphi}^{g}p_{k}\|_{B_{\psi}}\leq\|V_{\varphi}^{g}\|\|f-p_{k}\|_{L_{\psi}^{2}}
\rightarrow0(k\rightarrow\infty).$$
Since $V_{\varphi}^{g}p_{k}\in B_{\psi,0}(\mathbb{D})$ and $B_{\psi,0}(\mathbb{D})$ is the the closed subset of $B_{\psi}(\mathbb{D})$,
we have $V_{\varphi}^{g}(AL_{\psi}^{2}(\mathbb{D}))\subset B_{\psi,0}(\mathbb{D})$.\\
Since $V_{\varphi}^{g}:AL_{\psi}^{2}(\mathbb{D})\rightarrow B_{\psi}(\mathbb{D})$ is bounded , we see that $V_{\varphi}^{g}:AL_{\psi}^{2}(\mathbb{D})\rightarrow B_{\psi,0}(\mathbb{D})$ is bounded.
\end{proof}

\begin{thm}
Let $\varphi$ be an analytic self-map of the unit disk $\mathbb{D}$ and $g\in H(\mathbb{D})$, $\psi\in\mathcal{D}$. Then $V_{\varphi}^{g}: AL_{\psi}^{2}(\mathbb{D})\rightarrow B_{\psi}(\mathbb{D})$ is compact if and only if $$\lim_{|\varphi(z)|\rightarrow 1}e^{\psi(\varphi(z))-2\psi(z)}\sqrt{\Delta\psi(\varphi(z))}|g'(z)|=0.$$
\end{thm}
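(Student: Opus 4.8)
The plan is to follow the pattern of Theorem 3.1 and first rewrite the boundary condition in kernel form. By Lemma 2.2, $e^{\psi(\varphi(z))}\sqrt{\Delta\psi(\varphi(z))}\sim\sqrt{K(\varphi(z),\varphi(z))}$, so the stated condition is equivalent to $\lim_{|\varphi(z)|\to1}\sqrt{K(\varphi(z),\varphi(z))}\,e^{-2\psi(z)}|g'(z)|=0$, and I would prove this form. Throughout I would use Lemma 2.4 as the working criterion: $V_{\varphi}^{g}$ is compact if and only if $\|V_{\varphi}^{g}f_{k}\|_{B_{\psi}}\to0$ for every bounded sequence $\{f_{k}\}$ in $AL_{\psi}^{2}(\mathbb{D})$ that tends to $0$ uniformly on compact subsets of $\mathbb{D}$. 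I also record that a compact operator is bounded, so in the necessity part the supremum condition of Theorem 3.1 is automatically available.

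For necessity, suppose $V_{\varphi}^{g}$ is compact and, arguing by contradiction, that the limit fails. Then there exist $\varepsilon_{0}>0$ and a sequence $\{z_{k}\}$ with $|\varphi(z_{k})|\to1$ and $\sqrt{K(\varphi(z_{k}),\varphi(z_{k}))}\,e^{-2\psi(z_{k})}|g'(z_{k})|\ge\varepsilon_{0}$. Writing $w_{k}=\varphi(z_{k})$, I would test against the normalized kernels $f_{k}=k_{w_{k}}=K(\cdot,w_{k})/\sqrt{K(w_{k},w_{k})}$, which satisfy $\|f_{k}\|_{L_{\psi}^{2}}=1$ exactly as in Theorem 3.1. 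The key point (discussed below) is that $f_{k}\to0$ uniformly on compact subsets of $\mathbb{D}$; granting this, Lemma 2.4 forces $\|V_{\varphi}^{g}f_{k}\|_{B_{\psi}}\to0$. On the other hand, since $f_{k}(w_{k})=\sqrt{K(w_{k},w_{k})}$,
\[
\|V_{\varphi}^{g}f_{k}\|_{B_{\psi}}\ge e^{-2\psi(z_{k})}|f_{k}(\varphi(z_{k}))|\,|g'(z_{k})|=\sqrt{K(w_{k},w_{k})}\,e^{-2\psi(z_{k})}|g'(z_{k})|\ge\varepsilon_{0},
\]
a contradiction, so the limit condition holds.

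For sufficiency, assume the limit condition and verify the criterion of Lemma 2.4. Let $\{f_{k}\}$ be bounded in $AL_{\psi}^{2}(\mathbb{D})$, say $\|f_{k}\|_{L_{\psi}^{2}}\le L$, with $f_{k}\to0$ uniformly on compact subsets. Since $V_{\varphi}^{g}f_{k}(0)=0$, I need only estimate $\sup_{z}e^{-2\psi(z)}|f_{k}(\varphi(z))|\,|g'(z)|$. Using Lemma 2.3 in the form $|f_{k}(\varphi(z))|\le L\sqrt{K(\varphi(z),\varphi(z))}$, the integrand is bounded by $L\sqrt{K(\varphi(z),\varphi(z))}\,e^{-2\psi(z)}|g'(z)|$. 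Given $\varepsilon>0$, the limit hypothesis yields $r\in(0,1)$ so that this is $<L\varepsilon$ whenever $|\varphi(z)|>r$. On the region $|\varphi(z)|\le r$ I would instead use $|f_{k}(\varphi(z))|\le\max_{|w|\le r}|f_{k}(w)|=:\epsilon_{k}\to0$ together with $C_{r}:=\sup_{|\varphi(z)|\le r}e^{-2\psi(z)}|g'(z)|<\infty$; the finiteness of $C_{r}$ follows from the supremum condition of Theorem 3.1 and the fact that $K(w,w)$ is bounded below by a positive constant on the compact set $\{|w|\le r\}$ (Lemma 2.2). Hence $\|V_{\varphi}^{g}f_{k}\|_{B_{\psi}}\le\max\{L\varepsilon,\ \epsilon_{k}C_{r}\}$, which is at most $L\varepsilon$ for all large $k$; letting $\varepsilon\to0$ gives $\|V_{\varphi}^{g}f_{k}\|_{B_{\psi}}\to0$, so $V_{\varphi}^{g}$ is compact.

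I expect the main obstacle to be the local-uniform decay $f_{k}=k_{w_{k}}\to0$ on compact subsets as $|w_{k}|\to1$; everything else is bookkeeping. I would establish it in the standard way for $\mathcal{D}$-weights: the sub-mean-value estimate $|f(w)|^{2}e^{-2\psi(w)}\le C\,\tau(w)^{-2}\int_{D(w,\tau(w))}|f|^{2}e^{-2\psi}\,dA$ combined with Lemma 2.2 gives $|f(w)|/\sqrt{K(w,w)}\le C'\big(\int_{D(w,\tau(w))}|f|^{2}e^{-2\psi}\,dA\big)^{1/2}\to0$ as $|w|\to1$ for every $f\in AL_{\psi}^{2}(\mathbb{D})$, since $\tau(w)\le C_{2}(1-|w|)\to0$ makes $D(w,\tau(w))$ a shrinking boundary tail of a convergent integral. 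Applying this to the fixed function $K_{z}$ shows $|f_{k}(z)|=|K(z,w_{k})|/\sqrt{K(w_{k},w_{k})}\to0$ pointwise; as $\{f_{k}\}$ is locally uniformly bounded by Lemma 2.3, normal-family (Vitali) reasoning upgrades pointwise to uniform convergence on compact subsets. A secondary subtlety worth one sentence in the write-up is that the limit condition alone does not force boundedness, so in the sufficiency step the hypothesis should be read together with the finiteness in Theorem 3.1, which is exactly what the region $|\varphi(z)|\le r$ requires.
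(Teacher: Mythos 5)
Your proof is correct in substance, and your sufficiency half is essentially identical to the paper's: split $\mathbb{D}$ into $\{|\varphi(z)|\le r\}$ and $\{|\varphi(z)|>r\}$, use Lemma 2.3 on the outer region, local uniform convergence of $f_k$ on the inner region, and conclude via Lemma 2.4. The genuine difference is in necessity. Both you and the paper test against the normalized kernels $k_{w_k}=K(\cdot,w_k)/\sqrt{K(w_k,w_k)}$, $w_k=\varphi(z_k)$, and both use the same lower bound $\|V_{\varphi}^{g}k_{w_k}\|_{B_{\psi}}\ge\sqrt{K(w_k,w_k)}\,e^{-2\psi(z_k)}|g'(z_k)|$; but you feed the kernels into Lemma 2.4, which obliges you to prove that $k_{w_k}\to0$ uniformly on compact subsets, and you do this via the sub-mean-value estimate $|f(w)|^2e^{-2\psi(w)}\lesssim\tau(w)^{-2}\int_{D(w,\delta\tau(w))}|f|^2e^{-2\psi}\,dA$ --- a Lin--Rochberg fact about $\mathcal{D}$-weights that the paper never states --- plus a Vitali/normal-families upgrade. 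The paper instead shows $k_{w_k}\to0$ \emph{weakly}: it pairs against polynomials (using the standing density assumption), notes $\|K_{w_k}\|_{L_{\psi}^{2}}=\sqrt{K(w_k,w_k)}\gtrsim e^{\psi(w_k)}/(1-|w_k|)\to\infty$ by Lemma 2.2 and Definition 1.1, and then invokes the fact that a compact operator on a Hilbert space sends weakly null sequences to norm-null sequences, bypassing Lemma 2.4 entirely. What each route buys: the paper's stays inside its own stated lemmas and assumptions but leans on complete continuity; yours is self-contained modulo the imported pointwise estimate, and the local-uniform decay of normalized kernels is a reusable fact. Finally, you correctly flagged a wrinkle the paper glosses over: the limit condition alone does not obviously bound $e^{-2\psi(z)}|g'(z)|$ on $\{|\varphi(z)|\le r\}$ (the paper simply asserts this when deducing boundedness from Theorem 3.1), and your patch of reading the hypothesis together with the finiteness condition of Theorem 3.1 mirrors what the paper implicitly does; just be aware that, stated that way, your "if" direction formally assumes slightly more than the theorem's literal hypothesis.
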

\begin{proof}
By Lemma 2.2, $$e^{\psi(\varphi(z))}\sqrt{\Delta\psi(\varphi(z))}\sim \sqrt{K(\varphi(z),\varphi(z))},$$
therefore, we should only show that $V_{\varphi}^{g}$ is compact if and only if
$$\lim_{|\varphi(z)|\rightarrow 1}\sqrt{K(\varphi(z),\varphi(z))}e^{-2\psi(z)}|g'(z)|=0.$$

First, assume that $\lim_{|\varphi(z)|\rightarrow 1}\sqrt{K(\varphi(z),\varphi(z))}e^{-2\psi(z)}|g'(z)|=0$, then for any $\varepsilon>0$, there is a positive real number $r_{0}\in(0,1)$ such that $$\sqrt{K(\varphi(z),\varphi(z))}e^{-2\psi(z)}|g'(z)|<\varepsilon$$ when $r_{0}<|\varphi(z)|<1$.
Besides, $\sqrt{K(\varphi(z),\varphi(z))}e^{-2\psi(z)}|g'(z)|$ is bounded when $|\varphi(z)|\leq r_{0}$, it is easy to see that
$$\underset {z\in\mathbb{D}}{\textup{sup}}\sqrt{K(\varphi(z),\varphi(z))}e^{-2\psi(z)}|g'(z)|<\infty.$$
By Theorem 3.1,  $V_{\varphi}^{g}:AL_{\psi}^{2}(\mathbb{D})\rightarrow B_{\psi}(\mathbb{D})$ is bounded.

Let $\{f_{k}\}_{k\in\mathbb{N}}$ be a bounded sequence in $AL_{\psi}^{2}(\mathbb{D})$ which uniformly converges to zero on any compact subset of $\mathbb{D}$ as $k\rightarrow\infty$.
Assume that for any $k\in\mathbb{N}$,$\|f_{k}\|\leq C,$ for some positive constant $C$.
Note that $$\underset {|\varphi(z)|\leq r_{0}}{\textup{sup}}|g'(z)|e^{-2\psi(z)}\leq M$$ for some constant $M$.
It follows that
\begin{eqnarray*}
|(V_{\varphi}^{g}f_{k})^{'}(z)|e^{-2\psi(z)}&=&|f_{k}(\varphi(z))||g'(z)|e^{-2\psi(z)}\\
&\leq&\underset {|\varphi(z)|\leq r_{0}}{\textup{sup}}|f_{k}(\varphi(z))||g'(z)|e^{-2\psi(z)}\\
&&+
\underset {|\varphi(z)|> r_{0}}{\textup{sup}}|f_{k}(\varphi(z))||g'(z)|e^{-2\psi(z)}\\
&\leq& M\underset {|\varphi(z)|\leq r_{0}}{\textup{sup}}|f_{k}(\varphi(z))|\\
&&+
\|f_{k}\|_{L_{\psi}^{2}}\underset {|\varphi(z)|> r_{0}}{\textup{sup}}\sqrt{K(\varphi(z),\varphi(z))}|g'(z)|e^{-2\psi(z)}\\
&\leq& M\underset {|\varphi(z)|\leq r_{0}}{\textup{sup}}|f_{k}(\varphi(z))|+\varepsilon\|f_{k}\|_{L_{\psi}^{2}}\\
&\leq& M\underset {|\varphi(z)|\leq r_{0}}{\textup{sup}}|f_{k}(\varphi(z))|+\varepsilon C.
\end{eqnarray*}
Since $\{f_{k}\}_{k\in\mathbb{N}}$ uniformly converges to zero on any compact subset of $\mathbb{D}$, there exists a $ K\in\mathbb{Z}^{+}$ such that if $k>K,$  we have  $ \underset {|\varphi(z)|\leq r_{0}}{\textup{sup}}|f_{k}(\varphi(z))|<\varepsilon. $
Therefore, $\|V_{\varphi}^{g}f_{k}\|_{B_{\psi}}\rightarrow0$ as $k\rightarrow\infty$. By Lemma 2.4, we see that $V_{\varphi}^{g}:AL_{\psi}^{2}(\mathbb{D})\rightarrow B_{\psi}(\mathbb{D})$ is compact.

Conversely, suppose that $V_{\varphi}^{g}:AL_{\psi}^{2}(\mathbb{D})\rightarrow B_{\psi}(\mathbb{D})$ is compact, then it is clear that $V_{\varphi}^{g}:AL_{\psi}^{2}(\mathbb{D})\rightarrow B_{\psi}(\mathbb{D})$ is bounded. Let $\{z_{k}\}_{k\in\mathbb{N}}$ be sequence in $\mathbb{D}$ such that $\underset {k\rightarrow\infty}{\textup{lim}}|\varphi(z_{k})|=1$. Let $$f_{k}(z)=\frac{K(z,\varphi(z_{k}))}{\sqrt{K(\varphi(z_{k}),\varphi(z_{k}))}},$$
then, $f_{k}\in AL_{\psi}^{2}(\mathbb{D})$ and $\|f_{k}\|_{L_{\psi}^{2}}=1$.

Since the set consisting of polynomials is dense in $ AL_{\psi}^{2}(\mathbb{D})$, for any $\varepsilon>0$ and $f\in AL_{\psi}^{2}(\mathbb{D})$,  there exists a polynomial $P_{f,\varepsilon}(z)\in AL_{\psi}^{2}(\mathbb{D})$  such that
$$\|P_{f,\varepsilon}-f\|_{L_{\psi}^{2}}<\frac{\varepsilon}{2}.$$

As
\begin{eqnarray*}
|\langle f_{k},f\rangle|&\leq& |\langle f_{k},f-P_{f,\varepsilon}\rangle|+|\langle P_{f,\varepsilon},f_{k}\rangle|\\
&\leq&\|f_{k}\|_{L_{\psi}^{2}}\|f-P_{f,\varepsilon}\|_{L_{\psi}^{2}}+|\langle P_{f,\varepsilon},f_{k}\rangle|
\end{eqnarray*}
by Lemma 2.2 and Definition 1.1, we have $$\sqrt{K(z,z)}\geq C_{1}\tau(z)^{-1}e^{\psi(z)}\geq\frac{C_{2}e^{\psi(z)}}{1-|z|}$$ for some positive constants $C_{1}$ and $C_{2}$.
Notice that $\lim_{k\rightarrow \infty}|\varphi(z_k)|=0$, we have
$$\langle P_{f,\varepsilon},f_{k}\rangle=\langle P_{f,\varepsilon},\frac{K_{\varphi(z_{k})}}{\|K_{\varphi(z_{k})}\|_{L_{\psi}^{2}}}\rangle
=\frac{1}{\|K_{\varphi(z_{k})}\|_{L_{\psi}^{2}}}P_{f,\varepsilon}(\varphi(z_{k}))\rightarrow 0~~~~
(k\rightarrow\infty)$$
That means that $f_{k}$ weakly converges to zero  as $k\rightarrow\infty$.\\

Because $V_{\varphi}^{g}:AL_{\psi}^{2}(\mathbb{D})\rightarrow B_{\psi}(\mathbb{D})$ is compact, we see that $\|V_{\varphi}^{g}f_{k}\|_{B_{\psi}}\rightarrow 0 ~~~~(k\rightarrow\infty)$. From the following fact
\begin{eqnarray*}
\|V_{\varphi}^{g}f_{k}\|_{B_{\psi}}&\geq&\underset {z\in\mathbb{D}}{\textup{sup}}|f_{k}(\varphi(z))||g'(z)|e^{-2\psi(z)}\\
&\geq&|f_{k}(\varphi(z_{k}))||g'(z_{k})|e^{-2\psi(z_{k})}\\
&=&\sqrt{K(\varphi(z_{k}),\varphi(z_{k}))}|g'(z_{k})|e^{-2\psi(z_{k})}
\end{eqnarray*}
we immediately obtain that $\underset {|\varphi(z)|\rightarrow1}{\textup{lim}}\sqrt{K(\varphi(z),\varphi(z))}|g'(z)|e^{-2\psi(z)}=0$.
The proof is completed.
\end{proof}

\begin{thm}
Let $\varphi$ be an analytic self-map of the unit disk $\mathbb{D}$ and $g\in H(\mathbb{D})$, $\psi\in\mathcal{D}$. Then the operator $V_{\varphi}^{g}: AL_{\psi}^{2}(\mathbb{D})\rightarrow B_{\psi,0}(\mathbb{D})$ is compact if and only if $V_{\varphi}^{g}: AL_{\psi}^{2}(\mathbb{D})\rightarrow B_{\psi,0}(\mathbb{D})$ is bounded and $$\lim_{|z|\rightarrow 1}e^{\psi(\varphi(z))-2\psi(z)}\sqrt{\Delta\psi(\varphi(z))}|g'(z)|=0.$$
\end{thm}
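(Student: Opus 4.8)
The plan is to run both implications through Lemma 2.5, the compactness criterion for bounded closed subsets of $B_{\psi,0}(\mathbb{D})$, rather than through the sequential criterion of Lemma 2.4; this keeps the two directions symmetric. First I would invoke Lemma 2.2 to replace the stated quantity $e^{\psi(\varphi(z))-2\psi(z)}\sqrt{\Delta\psi(\varphi(z))}|g'(z)|$ by the equivalent kernel expression $\sqrt{K(\varphi(z),\varphi(z))}\,e^{-2\psi(z)}|g'(z)|$, so that it suffices to characterize compactness by the vanishing of the latter as $|z|\to1$. Writing $B$ for the closed unit ball of $AL_{\psi}^{2}(\mathbb{D})$, the central computation, valid under either hypothesis, is the identity
$$\sup_{f\in B} e^{-2\psi(z)}\,|(V_{\varphi}^{g} f)'(z)| = e^{-2\psi(z)}|g'(z)|\sup_{f\in B}|f(\varphi(z))| = \sqrt{K(\varphi(z),\varphi(z))}\,e^{-2\psi(z)}|g'(z)|,$$
where I use $(V_{\varphi}^{g}f)'(z)=f(\varphi(z))g'(z)$ together with the reproducing-kernel extremal identity $\sup_{f\in B}|f(w)|=\sqrt{K(w,w)}$: the upper bound is Lemma 2.3, and equality is attained at the normalized kernel $k_{w}$, for which $k_{w}(w)=\sqrt{K(w,w)}$ and $\|k_{w}\|_{L_{\psi}^{2}}=1$.

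For necessity I would assume $V_{\varphi}^{g}$ is compact into $B_{\psi,0}(\mathbb{D})$. Compactness forces boundedness, so $V_{\varphi}^{g}(B)\subset B_{\psi,0}(\mathbb{D})$, and since $B_{\psi,0}(\mathbb{D})$ is closed in $B_{\psi}(\mathbb{D})$ the set $\overline{V_{\varphi}^{g}(B)}$ is a bounded closed, hence (being the closure of a relatively compact set) compact, subset of $B_{\psi,0}(\mathbb{D})$. Applying Lemma 2.5 to this set gives $\lim_{|z|\to1}\sup_{h\in\overline{V_{\varphi}^{g}(B)}}e^{-2\psi(z)}|h'(z)|=0$. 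Bounding this supremum below by the supremum over $V_{\varphi}^{g}(B)$ and using the displayed identity yields $\lim_{|z|\to1}\sqrt{K(\varphi(z),\varphi(z))}\,e^{-2\psi(z)}|g'(z)|=0$, which is the required limit after undoing Lemma 2.2.

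For sufficiency I would use the boundedness hypothesis to guarantee $V_{\varphi}^{g}(B)\subset B_{\psi,0}(\mathbb{D})$, so that $\overline{V_{\varphi}^{g}(B)}$ is again a bounded closed subset of $B_{\psi,0}(\mathbb{D})$. Since for each fixed $z$ the map $h\mapsto e^{-2\psi(z)}|h'(z)|$ satisfies $|e^{-2\psi(z)}|h_{1}'(z)|-e^{-2\psi(z)}|h_{2}'(z)||\le\|h_{1}-h_{2}\|_{B_{\psi}}$, it is continuous for the $B_{\psi}$-norm, so the supremum over $\overline{V_{\varphi}^{g}(B)}$ equals the supremum over $V_{\varphi}^{g}(B)$, which by the displayed identity is exactly $\sqrt{K(\varphi(z),\varphi(z))}\,e^{-2\psi(z)}|g'(z)|$. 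The hypothesis makes this tend to $0$ as $|z|\to1$, so Lemma 2.5 shows $\overline{V_{\varphi}^{g}(B)}$ is compact and hence $V_{\varphi}^{g}$ is compact.

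The step I expect to require the most care is the interchange of suprema, namely that $\sup_{h\in\overline{V_{\varphi}^{g}(B)}}e^{-2\psi(z)}|h'(z)|$ coincides with $\sup_{f\in B}e^{-2\psi(z)}|f(\varphi(z))g'(z)|$. For necessity only the inequality ``$\ge$'' is needed and is immediate, but sufficiency needs equality, which rests on the Lipschitz continuity of $h\mapsto e^{-2\psi(z)}|h'(z)|$ together with the density of $V_{\varphi}^{g}(B)$ in its closure. An alternative, closer in spirit to Theorem 3.3, would derive sufficiency by observing that the $|z|\to1$ condition implies the $|\varphi(z)|\to1$ condition of Theorem 3.3 (because $z$ ranging over a compact subset of $\mathbb{D}$ keeps $\varphi(z)$ in a compact subset), yielding compactness into $B_{\psi}(\mathbb{D})$, and then using that the range lies in the closed subspace $B_{\psi,0}(\mathbb{D})$; however, that route does not furnish necessity, for which the sharper Lemma 2.5 argument seems unavoidable.
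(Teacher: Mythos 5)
Your proof is correct, but its necessity half follows a genuinely different route from the paper's. For sufficiency the two arguments essentially coincide: both apply Lemma 2.5 to the image of the unit ball and bound $e^{-2\psi(z)}|(V_{\varphi}^{g}f)'(z)|$ above by $\sqrt{K(\varphi(z),\varphi(z))}\,e^{-2\psi(z)}|g'(z)|$ via Lemma 2.3; in fact you are more careful than the paper, which flatly asserts that $\{V_{\varphi}^{g}f:\|f\|_{L_{\psi}^{2}}\le 1\}$ is a \emph{closed} bounded subset of $B_{\psi,0}(\mathbb{D})$ (images of balls under bounded operators need not be closed), whereas you pass to the closure and justify the transfer of the supremum by the Lipschitz estimate $\bigl|\,e^{-2\psi(z)}|h_1'(z)|-e^{-2\psi(z)}|h_2'(z)|\,\bigr|\le\|h_1-h_2\|_{B_{\psi}}$. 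For necessity, however, the paper does not touch Lemma 2.5 at all: it deduces from compactness into $B_{\psi}(\mathbb{D})$ the condition $\lim_{|\varphi(z)|\to1}\sqrt{K(\varphi(z),\varphi(z))}\,e^{-2\psi(z)}|g'(z)|=0$ of Theorem 3.3, deduces from boundedness into $B_{\psi,0}(\mathbb{D})$ the condition $\lim_{|z|\to1}e^{-2\psi(z)}|g'(z)|=0$ of Theorem 3.2, and then upgrades the $|\varphi(z)|\to1$ limit to a $|z|\to1$ limit by splitting into the cases $|\varphi(z)|>r$ and $|\varphi(z)|\le r$, using on the latter a uniform bound $C_r$ for $\sqrt{K(\varphi(z),\varphi(z))}$. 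You instead apply the ``only if'' direction of Lemma 2.5 to the compact set $\overline{V_{\varphi}^{g}(B)}$ and lower-bound the resulting supremum by testing against the normalized kernels $k_{\varphi(z)}$, which gives the $|z|\to1$ limit in one stroke. Your route is more symmetric and self-contained (only Lemmas 2.2, 2.3, 2.5 and the extremal identity $\sup_{\|f\|\le1}|f(w)|=\sqrt{K(w,w)}$ are needed, with no dependence on Theorems 3.1--3.3), while the paper's route recycles its earlier theorems and makes explicit how little-Bloch boundedness converts the weaker $|\varphi(z)|\to1$ condition into the stronger $|z|\to1$ condition. One small correction to your closing remark: the Lemma 2.5 argument is not ``unavoidable'' for necessity --- the paper's Theorem 3.2/3.3 case-splitting argument is precisely an alternative that avoids it.
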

\begin{proof}
At first, we note that $$\lim_{|z|\rightarrow 1}e^{\psi(\varphi(z))-2\psi(z)}\sqrt{\Delta\psi(\varphi(z))}|g'(z)|=0$$ equals to
$$\underset {|z|\rightarrow1}{\textup{lim}}\sqrt{K(\varphi(z),\varphi(z))}e^{-2\psi(z)}|g'(z)|=0.$$

Firstly, we prove the sufficiency . Let $K=\{f:f\in AL_{\psi}^{2}(\mathbb{D}), \|f\|_{L_{\psi}^{2}}\leq1\}$. As $V_{\varphi}^{g}:AL_{\psi}^{2}(\mathbb{D})\rightarrow B_{\psi,0}(\mathbb{D})$ is bounded, $\{V_{\varphi}^{g}f:f\in K\}$ is the bounded closed set of $B_{\psi,0}(\mathbb{D})$. It suffices to show that $\{V_{\varphi}^{g}f:f\in K\}$ is compact in $B_{\psi,0}(\mathbb{D})$. By Lemma 2.5, it is only to prove
$$\underset {|z|\rightarrow1}{\textup{lim}}\underset {\|f\|_{L_{\psi}^{2}}\leq1}{\textup{sup}}e^{-2\psi(z)}|(V_{\varphi}^{g}f)^{'}(z)|=0.$$
By lemma 2.3, for any $ f\in K,$ we have
\begin{eqnarray*}
e^{-2\psi(z)}|(V_{\varphi}^{g}f)^{'}(z)|&=&e^{-2\psi(z)}|f(\varphi(z))||g^{'}(z)|\\
&\leq&\|f\|_{L_{\psi}^{2}}\sqrt{K(\varphi(z),\varphi(z))}e^{-2\psi(z)}|g'(z)|\\
&\leq&\sqrt{K(\varphi(z),\varphi(z))}e^{-2\psi(z)}|g'(z)|.
\end{eqnarray*}
Note that the condition  $\underset {|z|\rightarrow1}{\textup{lim}}\sqrt{K(\varphi(z),\varphi(z))}e^{-2\psi(z)}|g'(z)|=0$, we have
$$\underset {|z|\rightarrow1}{\textup{lim}}\underset {\|f\|_{L_{\psi}^{2}}\leq1}{\textup{sup}}e^{-2\psi(z)}|(V_{\varphi}^{g}f)^{'}(z)|=0.$$
Therefore, the operator $V_{\varphi}^{g}:AL_{\psi}^{2}(\mathbb{D})\rightarrow B_{\psi,0}(\mathbb{D})$ is compact.

Secondly, we will prove the necessity. Suppose $V_{\varphi}^{g}:AL_{\psi}^{2}(\mathbb{D})\rightarrow B_{\psi,0}(\mathbb{D})$ is compact, it is obvious that
$V_{\varphi}^{g}:AL_{\psi}^{2}(\mathbb{D})\rightarrow B_{\psi}(\mathbb{D})$ is compact. By Theorem 3.3, we have
$$\underset {|\varphi(z)|\rightarrow1}{\textup{lim}}\sqrt{K(\varphi(z),\varphi(z))}e^{-2\psi(z)}|g'(z)|=0.$$
That is, for any $\varepsilon>0$, there exists an $r\in(0,1)$, such that $$\qquad\qquad\qquad\qquad\qquad\sqrt{K(\varphi(z),\varphi(z))}e^{-2\psi(z)}|g'(z)|<\varepsilon.\quad\quad\qquad\qquad\qquad(*)$$
when $r<\varphi(z)<1$.\\

Since $V_{\varphi}^{g}: AL_{\psi}^{2}(\mathbb{D})\rightarrow B_{\psi,0}(\mathbb{D})$ is bounded,
by Theorem 3.2, we have
$$\underset {|z|\rightarrow1}{\textup{lim}}e^{-2\psi(z)}|g'(z)|=0.$$
Let $\varepsilon^{'}=\frac{\varepsilon}{C_{r}}$, there exists a positive real number $\sigma>0$, such that $$e^{-2\psi(z)}|g'(z)|<\frac{\varepsilon}{C_{r}}$$
when $\sigma<|z|<1$, where $C_{r}$ is the upper bound of $\sqrt{K(\varphi(z),\varphi(z))}$ when $|\varphi(z)|\leq r$.
Therefore, we have $$\sqrt{K(\varphi(z),\varphi(z))}e^{-2\psi(z)}|g'(z)|
<C_{r}\frac{\varepsilon}{C_{r}}=\varepsilon.$$
when $\sigma<|z|<1$ and $|\varphi(z)|\leq r$.\\
\par
Combines with $(\ast)$, we see that $\sqrt{K(\varphi(z),\varphi(z))}e^{-2\psi(z)}|g'(z)|<\varepsilon$ when $\sigma<|z|<1$.
Therefore, $$\underset {|z|\rightarrow1}{\textup{lim}}\sqrt{K(\varphi(z),\varphi(z))}e^{-2\psi(z)}|g'(z)|=0.$$
The proof is completed.
\end{proof}

\end{section}

\end{document}